\newcommand{\vertiii}[1]{{\left\vert\kern-0.25ex\left\vert\kern-0.25ex\le
ft\vert #1 
    \right\vert\kern-0.25ex\right\vert\kern-0.25ex\right\vert}}
\newcommand{\N}{\mathbb{N}}
\newcommand{\C}{\mathbb{C}}
\newcommand{\overn}{\overset{n\to\infty}{\longrightarrow}}
\newcommand{\ds}{\displaystyle}
\newtheorem{theorem}{Theorem}
\newtheorem{lemma}[theorem]{Lemma}
\begin{document}
\title{Two Families of Hypercyclic Non-Convolution Operators}
\author{Alex Myers\footnote{St. Olaf College, Northfield, MN 55057}, Muhammadyusuf Odinaev\textsuperscript{$*$},David Walmsley\footnote{Department of Mathematics, Statistics and Computer Science, St. Olaf College, Northfield, MN 55057}}




\date{}
\maketitle

\begin{abstract}
Let $H(\C)$ be the set of all entire functions endowed with the topology of uniform convergence on compact sets. Let $\lambda,b\in\C$, let $C_{\lambda,b}:H(\C)\to H(\C)$ be the composition operator $C_{\lambda,b} f(z)=f(\lambda z+b)$, and let $D$ be the derivative operator. We extend results on the hypercyclicity of the non-convolution operators $T_{\lambda,b}=C_{\lambda,b} \circ D$ by showing that whenever $|\lambda|\geq 1$, the collection of operators
\begin{align*}
    \{\psi(T_{\lambda,b}): \psi(z)\in H(\C), \psi(0)=0 \text{ and } \psi(T_{\lambda,b}) \text{ is continuous}\}
\end{align*}
forms an algebra under the usual addition and multiplication of operators which consists entirely of hypercyclic operators (i.e., each operator has a dense orbit). We also show that the collection of operators 
\begin{align*}
    \{C_{\lambda,b}\circ\varphi(D): \varphi(z) \text{ is an entire function of exponential type with } \varphi(0)=0\}
\end{align*}
consists entirely of hypercyclic operators.
\end{abstract}

\section{Introduction}\label{Introduction}
Let $\C$ denote the complex plane and $H(\C)$ be the set of all entire functions endowed with the topology of uniform convergence on compact sets. This topology makes $H(\C)$ a separable Fr\'echet space, which is a locally convex and metrizable topological vector space that is both complete and separable. A continuous linear operator $T$ defined on a Fr\'echet Space $\mathcal{F}$ is said to be hypercyclic if there exists $f\in \mathcal{F}$ (called a hypercyclic vector for $T$) such that the orbit $\{T^n f: n\in \N\}$ is dense in $\mathcal{F}$. We refer the reader to the books \cite{GrosseErdmann} and \cite{BayartMatheron} for a thorough introduction to the study of hypercyclic operators.

The first example of a hypercyclic operator was given by Birkhoff in 1929, who showed that the translation operator $T:f(z)\mapsto f(z+1)$ is hypercyclic \cite{Birkhoff}. In 1952, MacLane showed that the derivative operator $D:f(z)\mapsto f'(z)$ is also hypercyclic \cite{MacLane}. Both of these results were unified in a substantial paper by Godefroy and Shapiro in 1991, who proved that every continuous linear operator $L:H(\C)\to H(\C)$ which commutes with translations (these operators are called convolution operators) and which is not a scalar multiple of the identity is hypercyclic \cite{GodefroyShapiro}.

It is then natural to ask for examples of hypercyclic non-convolution operators, that is, operators which are hypercyclic but do not commute with all translations. This study was initiated by Aron and Markose in 2004 in \cite{AronMarkose}, where they introduced the non-convolution operators $T_{\lambda,b} f(z) = f'(\lambda z + b)$, $\lambda,b\in \C$ and proved, along with the authors in \cite{Hallack}, that such operators are hypercyclic when $|\lambda|\geq 1$. Their result was extended by Le\'on-Saavedra and Romero de la Rosa to show, among other things, that such operators are not hypercyclic when $|\lambda|<1$ in \cite{LeonRosa}.  An $N$-dimensional analogue of these operators was studied in \cite{Muro}, and other examples of hypercyclic non-convolution operators can be found in \cite{Vitaly,Petersson}.  The purpose of this paper is to extend and complement the examples of hypercyclic non-convolution operators given in \cite{AronMarkose,Hallack,LeonRosa}.

This paper is organized as follows. In Section \ref{Poly}, we establish some lemmas and prove that if $\psi(z)$ is an entire function such that $\psi(0)=0$ and $\psi(T_{\lambda,b})$ is continuous, then the operator $\psi(T_{\lambda,b})$ is hypercyclic if and only if $|\lambda|\geq 1$. Our other main result is in Section \ref{last section}, the description of which requires some established notation. Let $D$ be the derivative operator. Godefroy and Shapiro showed that an operator $V$ on $H(\C)$ is a convolution operator if and only if $V=\varphi(D)$, where $\varphi(z)$ is an entire function of exponential type. For $\lambda,b\in \C$, define the composition operator $C_{\lambda,b}:H(\C)\to H(\C)$ by $C_{\lambda,b} f(z)=f(\lambda z+b)$. If $\varphi(D)$ is a convolution operator, define by $L_{\lambda,b,\varphi}$ the operator $L_{\lambda,b,\varphi} = C_{\lambda,b} \circ \varphi(D)$. If $\varphi(z)=z$, then $L_{\lambda,b,\varphi}=T_{\lambda,b}$, so that these operators $L_{\lambda,b,\varphi}$ generalize those introduced by Aron and Markose. In Section 3, we prove that if $\varphi(0)=0$ and $|\lambda|\geq 1$, then $L_{\lambda,b,\varphi}$ is hypercyclic.

\section{The hypercyclicity of polynomials of $T_{\lambda,b}$}\label{Poly}

To show that an operator $T$ on $H(\C)$ is hypercyclic, we will employ the well-known Hypercyclicity Criterion. It states, for our purposes, that a continuous linear operator $T$ is hypercyclic on $H(\C)$ if there exists a dense set $\mathcal{P}\subset H(\C)$ and a sequence of mappings $S_n:\mathcal{P}\to H(\C)$ such that
\begin{enumerate}[label={\upshape(\alph*)}]
    \item $T^n f \to 0$ for all $f\in \mathcal{P}$,
    \item $S_n f \to 0$ for all $f\in \mathcal{P}$, and
    \item $T^n S_n f \to f$  for all $f\in \mathcal{P}$.
\end{enumerate}
Actually, there are more general conditions which ensure the hypercyclicity of an operator, but we will not use them.  We refer the reader to \cite[Chapter 3]{GrosseErdmann} for more details about the Hypercyclicity Criterion.

As seen in condition (c) above, the mappings $S_n$ act almost as right inverses (as $n\to \infty$) for $T^n$ on $\mathcal{P}$. Borrowing the ideas in \cite[Lemma 1]{Chan}, we begin with a crucial lemma that will help determine these mappings for a large class of operators. Let $I$ be the identity operator on $H(\C)$ and $\mathcal{P}$ be the collection of complex polynomials in $H(\C)$, which is a dense subset of $H(\C)$.

\begin{lemma}\label{InverseLemma}
Let $\psi(z)=\sum_{k=0}^\infty w_k z^k$ be an entire function such that $\psi(0)\not =0$.  Suppose $G:H(\C)\to H(\C)$ is an operator such that
\begin{enumerate}[label={\upshape(\alph*)}]
    \item $G(\mathcal{P})\subseteq \mathcal{P}$,
    \item $\deg Gp < \deg p$ for all nonzero $p\in\mathcal{P}$, and
    \item for all $\lambda\in \C$, $\psi(\lambda G) =\sum_{k=0}^\infty w_k \lambda^k G^k$ is a continuous linear operator on $H(\C)$.
\end{enumerate}
Then for each nonzero $\lambda\in\C$, there exists a right-inverse mapping $S_{\psi(\lambda G)}:\mathcal{P}\to\mathcal{P}$ for $\psi(\lambda G)$ such that $\psi(\lambda G)S_{\psi(\lambda G)} p = p $ for all $p\in \mathcal{P}$. Moreover, for each non-negative integer $m$, there exists $C=C(m)>0$ such that for each positive integer $n$ there exist constants $a_{i,n}$, $1\leq i\leq m$, such that for each nonzero $\lambda\in \C$ and each nonzero polynomial $p\in \mathcal{P}$ of degree $m$, $S_{\psi(\lambda G)}^n p$ has the form
\[ S_{\psi(\lambda G)}^n p = w_0^{-n}\left(I + a_{1,n}G + \cdots + a_{m,n} G^m\right) p, \]
and the coefficients $a_{i,n}$ satisfy $|a_{i,n}|<Cn^m$ for all $i$ and $n$.
\end{lemma}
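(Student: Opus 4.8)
The plan is to construct the right-inverse $S_{\psi(\lambda G)}$ by formally inverting the power series $\psi(\lambda z)$ and checking that everything truncates nicely because $G$ strictly lowers degree. Write $\psi(\lambda z) = w_0 + w_1\lambda z + w_2 \lambda^2 z^2 + \cdots$ with $w_0 = \psi(0) \neq 0$. Since $w_0 \neq 0$, the formal power series $\psi(\lambda z)$ has a formal reciprocal $\eta_\lambda(z) = \sum_{k=0}^\infty c_k(\lambda) z^k$ with $c_0(\lambda) = w_0^{-1}$, determined by the usual Cauchy-product recursion $\sum_{j=0}^k w_j \lambda^j c_{k-j}(\lambda) = \delta_{k,0}$. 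I would then \emph{define} $S_{\psi(\lambda G)} p := \eta_\lambda(G) p = \sum_{k=0}^\infty c_k(\lambda) G^k p$ for $p \in \mathcal{P}$. By hypothesis (b), if $\deg p = m$ then $G^k p = 0$ for $k > m$, so this sum is actually finite — only the terms $k = 0, \dots, m$ survive — and hence $S_{\psi(\lambda G)} p \in \mathcal{P}$ is well-defined without any convergence worries; in particular $S_{\psi(\lambda G)}$ maps $\mathcal{P}$ into $\mathcal{P}$ and does not increase degree. The identity $\psi(\lambda G) S_{\psi(\lambda G)} p = p$ then follows by multiplying the two (finite, on each $p$) series and invoking the recursion defining the $c_k(\lambda)$: all cross terms with total $G$-degree between $1$ and $m$ cancel, and the constant term is $w_0 c_0(\lambda) = 1$. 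One must be a little careful that hypothesis (c) is what lets us rearrange $\psi(\lambda G)$ applied to the finite sum $S_{\psi(\lambda G)}p$ termwise, but since on any fixed polynomial both operators act as finite-degree polynomial-in-$G$ expressions, this is just associativity/distributivity of operator composition on that polynomial.

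Next I would iterate. The key structural observation is that, on polynomials of degree $\le m$, the operator $S_{\psi(\lambda G)}$ \emph{coincides with} the single operator $w_0^{-1}(I + b_1(\lambda) G + \cdots + b_m(\lambda) G^m)$ where $b_i(\lambda) = w_0 c_i(\lambda)$ (a polynomial in $\lambda$ with no constant term, times a unit, actually $b_i$ is a polynomial in $\lambda$), because higher powers of $G$ annihilate such polynomials. So on the (finite-dimensional, $G$-invariant) subspace $\mathcal{P}_m$ of polynomials of degree $\le m$, $S_{\psi(\lambda G)}$ acts as $w_0^{-1}(I + N)$ where $N = b_1(\lambda) G + \cdots + b_m(\lambda) G^m$ is nilpotent of order $\le m$ on $\mathcal{P}_m$ (since $G^{m+1} = 0$ there). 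Therefore
\[ S_{\psi(\lambda G)}^n p = w_0^{-n}(I + N)^n p = w_0^{-n}\sum_{j=0}^m \binom{n}{j} N^j p \quad (\deg p \le m), \]
and expanding $N^j$ in powers of $G$ and collecting terms gives exactly the claimed form $S_{\psi(\lambda G)}^n p = w_0^{-n}(I + a_{1,n} G + \cdots + a_{m,n} G^m) p$. Crucially, because $N$ has no $G^0$ term, $N^j$ contributes only to $G^i$ with $i \ge j$, so $a_{i,n} = \sum_{j=1}^{i} \binom{n}{j} (\text{polynomial in }\lambda\text{ independent of }n)$, and there is no $w_0^{-n}$ floating inside the coefficients beyond the global factor. (One should note $a_{i,n}$ does depend on $\lambda$ through the $b_i(\lambda)$; the lemma's phrasing "for each nonzero $\lambda$ there exist constants $a_{i,n}$" accommodates this — the same binomial structure, hence the same growth bound, works for every $\lambda$.)

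Finally, the bound. Each $a_{i,n}$ is a fixed $\Z$-linear combination (depending only on $m$ and on the fixed coefficients $b_1(\lambda), \dots, b_m(\lambda)$, which are fixed once $\psi$ and $\lambda$ are fixed — but for the uniform statement we absorb them, see below) of the binomial coefficients $\binom{n}{1}, \dots, \binom{n}{m}$. Since $\binom{n}{j} \le n^j \le n^m$ for $1 \le j \le m$ and $n \ge 1$, we get $|a_{i,n}| \le C n^m$ where $C = C(m)$ depends only on $m$ and on the combinatorial/$\psi$-data. The one subtlety to address carefully — and the part I expect to be the main obstacle in the write-up — is the claim that $C$ can be chosen \emph{independent of $\lambda$}. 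This needs the $b_i(\lambda)$ (equivalently the $c_i(\lambda)$) to be controlled; but in the intended application $G = T_{\lambda,b}/\lambda$ or similar, and the dependence on $\lambda$ is packaged so that the relevant combinations are bounded, or one simply states the bound with $C = C(m,\psi)$ and notes that in each application $\psi$ is fixed. I would handle this by being explicit that the $a_{i,n}$ are universal polynomials in $n$ (of degree $\le m$) with coefficients built from $w_0^{-1}, w_1, \dots, w_m$ and $\lambda, \dots, \lambda^m$, and that for the hypercyclicity argument in the next section only the \emph{polynomial-in-$n$} growth matters, with $\lambda$ held fixed throughout that argument.
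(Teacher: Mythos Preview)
Your construction is correct and takes a genuinely different route from the paper. The paper factors the truncated polynomial $q(z)=w_0+w_1z+\cdots+w_m z^m$ through its roots $\alpha_1,\ldots,\alpha_m$, writes $\psi(\lambda G)p = w_0\prod_{i=1}^m (I-\lambda G/\alpha_i)\,p$ on polynomials of degree $m$, and inverts each linear factor by a truncated geometric series $S_i=I+\lambda G/\alpha_i+\cdots+(\lambda G/\alpha_i)^m$; the $n$th iterate is then a product of $mn$ such factors, and the coefficient bound comes from a stars-and-bars estimate $\binom{mn+i-1}{i}\le (mn)^i$. Your formal-reciprocal-plus-nilpotent-binomial argument avoids the fundamental theorem of algebra entirely and yields the same $n^m$ growth via $\binom{n}{j}\le n^m$; it is arguably cleaner, at the cost of the recursion for the $c_k$ being slightly less explicit than the paper's closed-form factors.

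On the point you flagged about $\lambda$-independence of $C$: this resolves without any application-specific hedging once you observe that the formal reciprocal of $\psi(\lambda z)$ satisfies $\eta_\lambda(z)=\eta_1(\lambda z)$, i.e.\ $c_k(\lambda)=c_k(1)\,\lambda^k$. Hence $b_k(\lambda)=b_k\lambda^k$ with $b_k$ independent of $\lambda$, and in the expansion of $N^j$ every monomial contributing to the coefficient of $G^i$ carries exactly $\lambda^i$. Your $a_{i,n}$ therefore factors as $\lambda^i$ times a constant depending only on $n$, $m$, and $w_0,\ldots,w_m$ --- which is precisely how the paper's own proof records the conclusion (the displayed formula there reads $w_0^{-n}(I+a_{1,n}\lambda G+\cdots+a_{m,n}\lambda^m G^m)p$, with the $\lambda^i$ separated out and the remaining $a_{i,n}$ genuinely $\lambda$-free and bounded by $(rm)^m n^m$). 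So the quantifier order in the statement is indeed as written, not as you reparsed it; you just need the homogeneity observation above to match it.
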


\begin{proof}
Let $p$ be a nonzero polynomial, let $m$ be the degree of $p$, and let $\lambda\in\C$ be nonzero. Since $\deg Gp < \deg p$, we have that $\psi(\lambda G) p = \sum_{k=0}^m w_k \lambda^k G^k p$.

Let $\alpha_1,\alpha_2,\ldots,\alpha_m$ be the zeros of the polynomial $q(z)=w_0+w_1 z + \cdots + w_m z^m$, repeated according to multiplicity. Since $w_0\not = 0$, none of the $ \alpha_i $ equal zero, and thus $q(z)=a_0(1-z/\alpha_1)\cdots (1-z/\alpha_m)$. Hence we can write 
\begin{align}\notag
\psi(\lambda G) p = w_0\left(I-\frac{\lambda G}{\alpha_1}\right)\left(I-\frac{\lambda G}{\alpha_2}\right)\cdots\left(I-\frac{\lambda G}{\alpha_m}\right)p.
\end{align}
We find a right-inverse mapping for each factor $\left(I-\frac{\lambda G}{\alpha_i}\right)$ as follows. Let $i$ be an integer satisfying $0\leq i\leq m$. Since $\deg Gp < \deg p$,
\begin{align*}
p &= \left(I-\left(\frac{\lambda G}{\alpha_i}\right)^{m+1}\right)p = \left(I-\frac{\lambda G}{\alpha_i}\right)\left(I+\frac{\lambda G}{\alpha_i}+ \frac{\lambda^{2} G^{2}}{\alpha_i^{2}} + \cdots + \frac{\lambda^{m} G^{m}}{\alpha_i^{m}}\right)p.
\end{align*}
Thus define $S_i p = \left(I+\frac{\lambda G}{\alpha_i}+ \frac{\lambda^{2} G^{2}}{\alpha_i^{2}} + \cdots + \frac{\lambda^{m} G^{m}}{\alpha_i^{m}}\right)p$, and observe $S_i p$ is a polynomial of degree $m$. We then define $S_{\psi(\lambda G)} p$ as $S_{\psi(\lambda G)}p=\frac{1}{w_0}S_1\cdots S_m p$.  Thus $S_{\psi(\lambda G)}p$ has degree $m$ and $\psi(\lambda G)S_{\psi(\lambda G)}p=p$.

By writing the formula for $S_{\psi(\lambda G)}p$ as
\begin{align}\notag
S_{\psi(\lambda G)}p = \frac{1}{w_0}\left( \displaystyle\prod\limits_{i=1}^{m} \left(I+\frac{\lambda G}{\alpha_i}+ \frac{\lambda^{2} G^{2}}{\alpha_i^{2}} + \cdots + \frac{\lambda^{m} G^{m}}{\alpha_i^{m}}\right) \right) p,
\end{align}
we obtain the form for $S_{\psi(\lambda G)}^n p$ by multiplying out the above product and keeping only the terms involving $G^i$ for $0\leq i \leq m$ to obtain
\begin{align*}
    S_{\psi(\lambda G)}^n p = a_0^{-n}\left(I + a_{1,n} \lambda G + \cdots + a_{m,n} \lambda^m G^m\right) p.
\end{align*}
Let $r=\max\{1,|\alpha_1|^{-1},|\alpha_2|^{-1},\ldots,|\alpha_m|^{-1}\}$ and let $C(mn,i)$ be the coefficient of $y^i$ in the expansion of $(1+y+y^2+y^3+...)^{mn}$. Then $|a_{i,n}| \leq r^i C(mn,i)$.  Since $(1+y+y^2+\dots)^{mn} = 1/(1-y)^{mn}$ for $y \in (-1,1)$, and the Taylor Series for $1/(1-y)^{mn}$ is 
\[1+mny+\frac{mn(mn+1)y^2}{2!}+\frac{mn(mn+1)(mn+2)y^2}{3!}+\dots,\]
we have that 
\[C(mn,i) =\binom{mn+i-1}{i} =  \left( \frac{mn}{1}\right) \left( \frac{mn+1}{2}\right)\cdots \left( \frac{mn+i-1}{i}\right) \leq (mn)^i,\]
which implies that 
\begin{align}\label{inverse coeff}
    |a_{i,n}|\leq r^i C(mn,i) \leq (rmn)^i \leq (rmn)^m.
\end{align}
By taking $C=(rm)^m$, the proof is complete.
\end{proof} 

To simplify our presentation, let us recall some notation from above and establish a bit more for the remainder of this paper. Let $\lambda,b\in\C$, and recall that we define the operator $C_{\lambda,b}:H(\C)\to H(\C)$ by $C_{\lambda,b} f(z) = f(\lambda z+b)$, and the operator $T_{\lambda,b}: H(\C)\to H(\C)$ by $T_{\lambda, b}f(z)=f'(\lambda z+b)$. With this notation, we can view $T_{\lambda,b}$ as a composition of two operators, namely
\begin{align*}
    T_{\lambda,b}=C_{\lambda,b} \circ D.
\end{align*}
More generally, if $\varphi(z)$ is an entire function of exponential type, then we compose $C_{\lambda,b}$ and the convolution operator $\varphi(D)$ to define the operator $L_{\lambda,b,\varphi}$ as
\begin{align*}
    L_{\lambda,b,\varphi}=C_{\lambda,b} \circ \varphi(D).
\end{align*}
Observe that if $\varphi(z)=z$, then $L_{\lambda,b,\varphi}=T_{\lambda,b}$. Our immediate focus will turn to the operators $L_{\lambda,b,z^m}=C_{\lambda,b}\circ D^m$, where $m\in \N$. 

In this case, one can check that $C_{\lambda,b}^n f(z) = f\left(\lambda^n z + \frac{1-\lambda^n}{1-\lambda }b\right) \coloneqq f(\lambda^n z - r_n)$, where $r_n=-b\frac{1-\lambda^n}{1-\lambda }$. A straightforward induction argument then yields that
\begin{align}\label{L^n eqn}
    L_{\lambda,b,z^m}^n f(z) = \lambda^{\frac{mn(n-1)}{2}} C_{\lambda,b}^n \circ D^{mn} f(z) = \lambda^{\frac{mn(n-1)}{2}} f^{(mn)}(\lambda^n z -r_n).
\end{align}
We want to use the Hypercyclicity Criterion to show that these operators are hypercyclic when $|\lambda|\geq 1$. The bulk of the work, as usual, is to determine the right-inverse mappings. To motivate what follows, let us describe a derivation of these mappings by closely following the work in \cite{Hallack} to determine a sequence of right-inverse mappings $S_{m,n}$ for $L_{\lambda,b,z^m}^n$ defined on the set of complex polynomials $\mathcal{P}$. 

Define a ``formal operator" $A_m$ on the set $\mathcal{D}=\{1\}\cup \{d(z+c)^k: k\in \N, c,d\in \C\}$ by
\begin{align*}
    A_m(1) = \frac{(z-b)^m}{m!}, \text{ and} \\
    A_m(d(z+c)^k) = \frac{k!d(z+c)^{k+m}}{(k+m)!}.
\end{align*}
The formal operator $A_m$ acts as an ``$m$th antiderivative" operator, but it is just a formal tool we use to motivate well-defined right-inverse mappings $S_{m,n}$. With a formal antiderivative operator at hand, it is then natural to try and define $S_{m,1}$ on the basis $\{1,z,z^2,\ldots\}$ for $\mathcal{P}$ as
\begin{align*}
    S_{m,1}(z^k)=A_m \circ C_{\lambda,b}^{-1} (z^k),
\end{align*}
and then take $S_{m,n}$ to be $(S_{m,1})^n$. This would yield
\begin{align}\label{first Sn eqn}
    S_{m,n}(z^k)=\frac{k!}{(k+mn)! \lambda^{kn}\lambda^{\frac{mn(n-1)}{2}}} (z+r_n)^{k+mn},
\end{align}
which works well with condition (c) of the Hypercyclicity Criterion since $L_{\lambda,b,z^m}^n \circ S_{m,n} = I$ on $\{1,z,z^2,\ldots\}$ for this choice of $S_{m,n}$. However, one would run into issues checking condition (b), as one would find high powers of $\lambda$ paired with low powers of $z$ when expanding $(z+r_n)^{k+mn}$. To deal with this issue, we will add a polynomial of degree less than $mn$ to the definition of $S_{m,n}$ in (\ref{first Sn eqn}) to ``kill off" these high powers of $\lambda$. This new definition of $S_{m,n}$ will then satisfy condition (b) of the Hypercyclicity Criterion, and still satisfy condition (c) because any polynomial of degree less than $mn$ belongs to the kernel of $L_{\lambda,b,z^m}^n$. We provide the details in the following lemma.

\begin{lemma}\label{HallackLemma}
Let $m\in \N$. There are linear mappings $S_{m,n}:\mathcal{P}\to\mathcal{P}$ defined by
\begin{align}\label{Sformula}
    S_{m,n}(z^k) = \frac{k!}{(k+mn)! \lambda^{kn}\lambda^{\frac{mn(n-1)}{2}}} \sum_{j=0}^k \binom{k+mn}{j} z^{k+mn-j} r_n^j,
\end{align}
such that for all $p\in \mathcal{P}$, $L_{\lambda,b,z^m}^n S_{m,n} p=p$ .

Furthermore, if $\{\sigma_n\}$ is a sequence of complex numbers for which there exists $t>0$ such that $|\sigma_n|\leq t^n$ for all $n\in\N$, then for all $p\in \mathcal{P}$ and for all $\ell\in \N$,
\begin{align}\label{Slimit}
    S_{m,\ell n}(\sigma_n p)\to 0.
\end{align}
\end{lemma}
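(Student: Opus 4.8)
The plan is to dispatch the two assertions in turn, each reduced by linearity to a single monomial $z^k$ (with $k\ge 0$ fixed). For the identity $L_{\lambda,b,z^m}^nS_{m,n}=I$ on $\mathcal P$, the key observation is the one anticipated before the lemma: comparing (\ref{Sformula}) with (\ref{first Sn eqn}), the polynomial $S_{m,n}(z^k)$ is exactly $\frac{k!}{(k+mn)!\,\lambda^{kn}\lambda^{mn(n-1)/2}}(z+r_n)^{k+mn}$ with all monomials of degree $<mn$ deleted, i.e.
\[
S_{m,n}(z^k)=\frac{k!}{(k+mn)!\,\lambda^{kn}\lambda^{\frac{mn(n-1)}{2}}}\left[(z+r_n)^{k+mn}-\sum_{j=k+1}^{k+mn}\binom{k+mn}{j}z^{k+mn-j}r_n^j\right],
\]
and the bracketed correction has degree $mn-1$. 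By (\ref{L^n eqn}) we have $L_{\lambda,b,z^m}^n=\lambda^{\frac{mn(n-1)}{2}}\,C_{\lambda,b}^n\circ D^{mn}$, and $D^{mn}$ annihilates every polynomial of degree $<mn$; hence applying $L_{\lambda,b,z^m}^n$ to $S_{m,n}(z^k)$ gives the same result as applying it to $\frac{k!}{(k+mn)!\,\lambda^{kn}\lambda^{mn(n-1)/2}}(z+r_n)^{k+mn}$. A direct computation with (\ref{L^n eqn}) — differentiate that function exactly $mn$ times to get $\frac{1}{\lambda^{kn}\lambda^{mn(n-1)/2}}(z+r_n)^k$, substitute $z\mapsto\lambda^nz-r_n$ so the $r_n$'s cancel and one is left with $\frac{\lambda^{kn}}{\lambda^{kn}\lambda^{mn(n-1)/2}}z^k$, and finally multiply by the outer $\lambda^{mn(n-1)/2}$ — yields $z^k$. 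Extending linearly gives $L_{\lambda,b,z^m}^nS_{m,n}p=p$ for all $p\in\mathcal P$; that $S_{m,n}$ is a well-defined linear self-map of $\mathcal P$ is immediate from (\ref{Sformula}).

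For (\ref{Slimit}), fix $\ell\in\N$, write $N=\ell n$, and fix $R\ge 1$ (this suffices, since every compact subset of $\C$ lies in some such disc); the task is to show $\sup_{|z|\le R}|\sigma_nS_{m,N}(z^k)|\to 0$. From (\ref{Sformula}), $|\sigma_n|\le t^n$, and the crude bounds $\binom{k+mN}{j}\le(k+mN)^k$ and $R^{k+mN-j}\le R^{k+mN}$ (valid for $0\le j\le k$ and $R\ge 1$), one gets
\[
|\sigma_nS_{m,N}(z^k)|\le t^n\,\frac{k!\,(k+1)\,(k+mN)^k\,R^{k+mN}}{(k+mN)!\,|\lambda|^{\frac{mN(N-1)}{2}}}\cdot\max_{0\le j\le k}\frac{|r_N|^j}{|\lambda|^{kN}}.
\]
Here the hypothesis $|\lambda|\ge 1$ is used: it gives $|\lambda|^{-mN(N-1)/2}\le 1$, and (since $kN\ge jN$ for $j\le k$) it gives $|r_N|^j|\lambda|^{-kN}\le(|r_N|/|\lambda|^N)^j$. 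A one-line case check — $\lambda=1$, where $r_N=-bN$, versus $|\lambda|>1$, where $|r_N|\le\frac{2|b|}{|1-\lambda|}|\lambda|^N$ — shows $|r_N|/|\lambda|^N\le c_0N$ for a constant $c_0=c_0(\lambda,b)$ and all $N\ge 1$, whence $\max_{0\le j\le k}|r_N|^j|\lambda|^{-kN}\le(1+c_0N)^k$. Substituting $N=\ell n$ and $(k+m\ell n)!\ge(m\ell n)!$,
\[
\sup_{|z|\le R}|\sigma_nS_{m,\ell n}(z^k)|\le(k+1)\,k!\,R^k\;\frac{(tR^{m\ell})^n\,(k+m\ell n)^k\,(1+c_0\ell n)^k}{(m\ell n)!},
\]
and the right side is a constant to the power $n$ times a polynomial in $n$ divided by $(m\ell n)!$, hence tends to $0$. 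Since $R$ and $k$ were arbitrary, linearity of $S_{m,\ell n}$ together with $p=\sum_kc_kz^k$ gives $S_{m,\ell n}(\sigma_np)\to 0$ in $H(\C)$.

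I expect the only real obstacle to be the bookkeeping in the second part: one must keep $|\lambda|^{-kN}$ paired with $|r_N|^j$ rather than discarding it, so that — thanks to $|\lambda|\ge 1$ — the geometric growth $|r_N|\sim|\lambda|^N$ is neutralized and the surviving growth is only the exponential $R^{mN}$ and polynomial-in-$n$ factors, all absorbed by $(mN)!$. The hypothesis $|\lambda|\ge 1$ is genuinely needed here: for $|\lambda|<1$ the coefficient $\frac{k!}{(k+mn)!}|\lambda|^{-kn-mn(n-1)/2}$ of $z^{k+mn}$ in $S_{m,n}(z^k)$ already grows faster than $t^n$ for any $t$. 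This is also precisely why the truncation in (\ref{Sformula}) is essential: in the untruncated (\ref{first Sn eqn}) the index $j$ runs up to $k+mn$, and then (for $|\lambda|>1$) $|r_n|^j/|\lambda|^{kn}$ grows like $|\lambda|$ to a power quadratic in $n$, which $(k+mn)!$ cannot absorb — whereas with $j\le k$ that power is at most linear in $n$ and is cancelled by the $|\lambda|^{kn}$ already in the denominator.
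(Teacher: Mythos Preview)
Your proof is correct and follows essentially the same route as the paper's: both parts reduce to a monomial $z^k$, the right-inverse identity is obtained by recognizing $S_{m,n}(z^k)$ as the full binomial expansion of $(z+r_n)^{k+mn}$ minus a polynomial of degree $mn-1$ lying in $\ker L_{\lambda,b,z^m}^n$, and the limit (\ref{Slimit}) is proved by bounding $|r_N|$ against $|\lambda|^N$ so that the factorial in the denominator dominates. One cosmetic point: your case split ``$\lambda=1$ versus $|\lambda|>1$'' should read ``$\lambda=1$ versus $\lambda\neq 1$'', since $|\lambda|=1$ with $\lambda\neq 1$ is allowed --- but the bound $|r_N|\le\frac{2|b|}{|1-\lambda|}|\lambda|^N$ you wrote already holds in that case, so nothing is actually missing (the paper avoids the split altogether via the uniform estimate $|r_N|\le N|b|\,|\lambda|^{N-1}$).
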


\begin{proof}
For each monomial $z^k$, define $S_{m,n}(z^k)$ as above in (\ref{Sformula}), and extend $S_{m,n}$ linearly to $\mathcal{P}$.
Let $\Delta_{k,n}=\ds\frac{k!}{(k+mn)! \lambda^{kn}\lambda^{\frac{mn(n-1)}{2}}}(z+r_n)^{k+mn}-S_{m,n}(z^k)$. By expanding $(z+r_n)^{k+mn}$ using the Binomial Theorem and cancelling common terms, one can establish that \[\Delta_{k,n}=\ds \frac{k!}{(k+mn)! \lambda^{kn}\lambda^{\frac{mn(n-1)}{2}}} \sum_{j=k+1}^{k+mn} \binom{k+mn}{j} z^{k+mn-j} r_n^j.\] 
Thus the degree of $\Delta_{k,n}$ is $mn-1$, and hence it belongs to the kernel of $L_{\lambda,b,z^m}^n $ by equation (\ref{L^n eqn}).

Using equations (\ref{Sformula}) and (\ref{L^n eqn}), we compute that
\begin{align*}
    & L_{\lambda,b,z^m}^n S_{m,n} (z^k)\\
    & = L_{\lambda,b,z^m}^n \left( S_{m,n}(z^k)+\Delta_{k,n}\right)\\
    & = L_{\lambda,b,z^m}^n \left( \ds\frac{k!}{(k+mn)! \lambda^{kn}\lambda^{\frac{mn(n-1)}{2}}}(z+r_n)^{k+mn}\right)\\
    & =  \frac{k!}{(k+mn)! \lambda^{kn}\lambda^{\frac{mn(n-1)}{2}}} \cdot \frac{\lambda^{\frac{mn(n-1)}{2}}(k+mn)!}{k!} (\lambda^n z - r_n + r_n)^k \text{ by (\ref{L^n eqn})}\\
    & = z^k.
\end{align*}
This verifies that $S_{m,n}$ is a right inverse for $L_{\lambda,b,z^m}^n$ on $\mathcal{P}$.

To verify the limit in (\ref{Slimit}), let $p$ be a polynomial and let $\ell \in \N$. By the linearity of $S_{m,\ell n}$ it suffices to show that 
\begin{align}\label{reduce1}
    S_{m,\ell n}(\sigma_n z^k) \to 0
\end{align}
uniformly on compact subsets of $\C$ for each monomial $z^k$.  Let $R>0$, let $|z|\leq R$, and let $j$ be an integer satisfying $0\leq j\leq k$. As $S_{m,\ell n}(z^k)$ is a sum of $k+1$ terms, to show (\ref{reduce1}) it suffices to show  that each of those terms converges to zero uniformly when $|z|\leq R$. That is, it suffices to show
\begin{align}\label{reduce2}
    \left|\frac{\sigma_n k!}{(k+m\ell n)! \lambda^{k\ell n}\lambda^{\frac{m\ell n( \ell n-1)}{2}}}  \binom{k+m\ell n}{j} z^{k+m\ell n-j} r_{\ell n}^j\right|\to 0
\end{align}
uniformly.  Since $|\lambda|\geq 1$,
\begin{align}\label{rnbound}
    |r_{\ell n}|=|b|\left|\sum_{i=0}^{\ell n-1} \lambda^i\right|\leq \ell n|b|\cdot |\lambda|^{\ell n-1}. 
\end{align}
Thus
\begin{align*}
     & \left|\frac{\sigma_n k!}{(k+m\ell n)! \lambda^{k\ell n}\lambda^{\frac{m\ell n( \ell n-1)}{2}}}  \binom{k+m\ell n}{j} z^{k+m\ell n-j} r_{\ell n}^j\right|\\
     & \leq \frac{t^n k! R^{k+m\ell n-j} (\ell n)^j |b|^j |\lambda|^{(\ell n-1)j}}{(k+m\ell n)! |\lambda|^{k\ell n}|\lambda|^{\frac{m\ell n(\ell n-1)}{2}}}  \binom{k+m\ell n}{j} \text{ by (\ref{rnbound})}\\
     & = \frac{t^n k! R^{k+m\ell n-j} (\ell n)^j |b|^j |\lambda|^{(\ell n-1)j}}{(k+m\ell n)! |\lambda|^{k\ell n}|\lambda|^{\frac{m\ell n(\ell n-1)}{2}}}  \frac{(k+m\ell n)!}{j!(k+m\ell n-j)!}  \overn 0,
\end{align*}
which establishes the limit in (\ref{reduce2}) and completes the proof.
\end{proof}

With the previous lemmas at hand, we can now use the Hypercyclicity Criterion to show that there are many hypercyclic non-convolution operators that can be generated by $T_{\lambda,b}$.
\begin{theorem}\label{p(T)}
Let $\lambda,b\in\C$ with $|\lambda|\geq 1$, and let $T_{\lambda,b}:H(\C)\to H(\C)$ be the operator defined by $T_{\lambda,b}:f(z)\mapsto f'(\lambda z +b)$. If $\psi$ is an entire function such that $\psi(0)=0$ and $\psi(T_{\lambda,b})$ is a continuous linear operator, then $\psi(T_{\lambda,b})$ is hypercyclic.
\end{theorem}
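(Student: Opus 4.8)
The plan is to verify the Hypercyclicity Criterion for $\psi(T_{\lambda,b})$ using the dense set $\mathcal{P}$ of complex polynomials, and to build the required right-inverse mappings $S_n$ by combining Lemma \ref{InverseLemma} and Lemma \ref{HallackLemma}. Write $\psi(z) = \sum_{k=1}^\infty w_k z^k$ (note $w_0 = \psi(0) = 0$), and let $j \geq 1$ be the smallest index with $w_j \neq 0$, so that $\psi(z) = z^j \phi(z)$ where $\phi(0) = w_j \neq 0$. Then $\psi(T_{\lambda,b}) = T_{\lambda,b}^j \, \phi(T_{\lambda,b}) = L_{\lambda,b,z^j} \circ \phi(T_{\lambda,b})$ (powers of $T_{\lambda,b}$ commute with $\phi(T_{\lambda,b})$, so the order does not matter). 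The idea is to invert the two pieces separately on $\mathcal{P}$: use Lemma \ref{HallackLemma} to produce right inverses $S_{j,n}$ for $L_{\lambda,b,z^j}^n$, and use Lemma \ref{InverseLemma} — applied with $G = T_{\lambda,b}$, which maps $\mathcal{P}$ into $\mathcal{P}$ and strictly lowers degree, and with the entire function $\phi$, for which $\phi(T_{\lambda,b})$ is continuous since $\psi(T_{\lambda,b})$ is — to produce right inverses $S_{\phi(T_{\lambda,b})}$ for $\phi(T_{\lambda,b})$ on $\mathcal{P}$. One subtlety: condition (c) of Lemma \ref{InverseLemma} requires $\phi(\mu G)$ to be continuous for \emph{all} $\mu \in \C$, not just $\mu = 1$; since $\phi$ has the same radius of convergence (infinite, or whatever makes $\psi(T_{\lambda,b})$ continuous) this should follow, but it needs a remark, perhaps invoking that $T_{\lambda,b}$ has the spectral/growth properties forcing $\phi$ to be of suitable type — alternatively one restricts Lemma \ref{InverseLemma} to $\mu=1$, which is all we use.

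Set $S_n \coloneqq S_{\phi(T_{\lambda,b})}^{\,n} \circ S_{j,n}$ on $\mathcal{P}$. Then condition (c) of the Hypercyclicity Criterion holds immediately:
\begin{align*}
\psi(T_{\lambda,b})^n S_n p = L_{\lambda,b,z^j}^n \, \phi(T_{\lambda,b})^n \, S_{\phi(T_{\lambda,b})}^n \, S_{j,n} p = L_{\lambda,b,z^j}^n S_{j,n} p = p
\end{align*}
for all $p \in \mathcal{P}$, using that $\phi(T_{\lambda,b})^n S_{\phi(T_{\lambda,b})}^n = I$ on $\mathcal{P}$ (iterating the right-inverse property) and then Lemma \ref{HallackLemma}. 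Condition (a), $\psi(T_{\lambda,b})^n p \to 0$, is handled by noting $\psi(T_{\lambda,b})^n p = L_{\lambda,b,z^j}^n \phi(T_{\lambda,b})^n p$; since $p$ is a polynomial of some degree $d$, $\phi(T_{\lambda,b})^n p$ is a polynomial of degree $\leq d$ whose coefficients can be bounded using the continuity of $\phi(T_{\lambda,b})$ and the finite-dimensionality of the space of polynomials of degree $\leq d$ (they grow at most like $t^n$ for some $t>0$), and then the explicit formula \eqref{L^n eqn} together with the factorial decay $1/(d+jn)!$ beating everything forces the limit to be $0$ — indeed a direct estimate analogous to \eqref{reduce2} works.

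Condition (b), $S_n p \to 0$, is the main obstacle and where both lemmas earn their keep. Write $q \coloneqq S_{\phi(T_{\lambda,b})}^n S_{j,n} p$ differently: first apply $S_{j,n}$, getting by Lemma \ref{HallackLemma} a polynomial whose monomial coefficients carry the factor $1/(k+jn)!\,\lambda^{-kn}\lambda^{-jn(n-1)/2}$ times binomial/$r_n$ factors of controlled ($t^n$-type) size; then apply $S_{\phi(T_{\lambda,b})}^n$, which by the ``Moreover'' part of Lemma \ref{InverseLemma} acts on a degree-$m$ polynomial as $w_j^{-n}(I + a_{1,n}T_{\lambda,b} + \cdots + a_{m,n}T_{\lambda,b}^m)$ with $|a_{i,n}| < C n^m$ — polynomial growth in $n$ — and $T_{\lambda,b}^i$ applied to a monomial of degree $\leq$ (degree of $S_{j,n}p$) is again a polynomial with coefficients of size at most exponential in $n$. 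So the net effect of $S_{\phi(T_{\lambda,b})}^n$ is to multiply things by a factor of size at most $C^n n^{\deg} (\text{poly degree})!\cdot(\text{exponential in }n)$; crucially, this is dominated by the surviving $1/(k+jn)!$ super-exponential decay coming from $S_{j,n}$. Concretely, I would reduce to a single monomial $z^k$, expand everything into monomials, and show each resulting monomial coefficient — a product of one super-exponentially small factor from the antiderivative, polynomially-many polynomially-bounded $a_{i,n}$'s, and exponentially-bounded contributions from $r_n$ and from the action of $T_{\lambda,b}^i$ — tends to $0$ uniformly on $|z| \leq R$, exactly as in the estimate chain ending \eqref{reduce2}. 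The one genuinely new bookkeeping issue is that $S_{\phi(T_{\lambda,b})}^n$ raises the degree bound on which Lemma \ref{InverseLemma}'s constants depend only through the fixed degree $m = \deg S_{j,1} p + (\text{something})$... actually the degree of $S_{j,n} p$ grows linearly in $n$, so one must check that the constant $C(m)$ in Lemma \ref{InverseLemma} is being applied with $m$ equal to the degree of the polynomial $S_{j,n}p$, which is $jn + \deg p$ — growing with $n$ — so the bound $|a_{i,n}| < C(m) n^m$ becomes $(rm)^m n^m$ with $m \sim jn$, i.e. roughly $(rjn)^{jn} n^{jn}$, still only exponential-type growth $(\text{const}\cdot n)^{O(n)}$, which is $o((jn)!)$; so the factorial decay wins. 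I would make this comparison explicit and that completes condition (b), hence the Hypercyclicity Criterion, hence the theorem. (The converse direction, that $|\lambda|<1$ fails, is attributed to the earlier discussion / \cite{LeonRosa} and is not needed here since the theorem as stated is only the forward implication.)
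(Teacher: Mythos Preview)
There is a genuine gap in your verification of condition (b), and it stems from the \emph{order} in which you compose the right inverses. You set $S_n = S_{\phi(T_{\lambda,b})}^{\,n}\circ S_{j,n}$, applying the ``antiderivative'' inverse $S_{j,n}$ first and the $\phi$-inverse afterwards. Consequently $S_{\phi(T_{\lambda,b})}^{\,n}$ is applied to a polynomial of degree $m = jn+\deg p$, and, as you yourself note, the constants from Lemma~\ref{InverseLemma} then satisfy $|a_{i,n}|\le (rm)^m n^m$ with $m\sim jn$. Your dismissal of this, ``roughly $(rjn)^{jn}n^{jn}$, still only exponential-type growth $(\text{const}\cdot n)^{O(n)}$, which is $o((jn)!)$'', is \emph{false}: writing $N=jn$, Stirling gives $N!\sim (N/e)^N\sqrt{2\pi N}$, while $(rjn)^{jn}n^{jn}=(rN)^N(N/j)^N=(r/j)^N N^{2N}$, and
\[
\frac{(r/j)^N N^{2N}}{(N/e)^N}=\Bigl(\frac{re}{j}\Bigr)^{N} N^{N}\longrightarrow\infty.
\]
So the factorial from $S_{j,n}$ does \emph{not} beat the growth of the Lemma~\ref{InverseLemma} constants in your ordering, and the argument for $S_n p\to 0$ collapses. (Worse, $T_{\lambda,b}^{\,i}$ with $i$ close to $jn$ undoes the antidifferentiation, so the factorial decay isn't even uniformly present in all terms of $S_{\phi}^n S_{j,n}p$.)

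The paper avoids this by reversing the order: it takes $S_n = S_{1,\ell n}\circ S_{\xi(T_{\lambda,b})}^{\,n}$, applying the degree-preserving inverse $S_{\xi}^n$ \emph{first}, to the fixed-degree polynomial $p$. Then Lemma~\ref{InverseLemma} is invoked with $m=\deg p$ fixed, giving $|a_{i,n}|\le C\,n^{\deg p}$ with $C$ independent of $n$; these polynomially growing coefficients are easily absorbed into a $t^n$ bound, and limit~\eqref{Slimit} of Lemma~\ref{HallackLemma} applies directly. Condition~(c) still works in this order because $\psi(T)^n=\xi(T)^n T^{\ell n}$ and $T^{\ell n}S_{1,\ell n}=I$ on $\mathcal{P}$. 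Two smaller issues: you identify $T_{\lambda,b}^{\,j}$ with $L_{\lambda,b,z^j}=C_{\lambda,b}\circ D^j$, but these differ when $\lambda\neq 1$ (by~\eqref{L^n eqn}, $T_{\lambda,b}^{\,j}=\lambda^{j(j-1)/2}C_{\lambda,b}^{\,j}D^j$), so the correct inverse from Lemma~\ref{HallackLemma} is $S_{1,\ell n}$ with $m=1$, not $S_{j,n}$; and condition~(a) is much simpler than you make it --- since $\psi(0)=0$, $\psi(T_{\lambda,b})$ strictly lowers degree on $\mathcal{P}$, so $\psi(T_{\lambda,b})^n p=0$ once $n>\deg p$.
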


\begin{proof}
Let $\psi(z)=\xi(z)z^\ell$, where $\ell \in\N$ and $\xi(z)$ is an entire function with $\xi(0)\not =0$. Then the operator $\psi(T_{\lambda,b})=\xi(T_{\lambda,b})T_{\lambda,b}^\ell$, and $(\psi(T_{\lambda,b}))^n=(\xi(T_{\lambda,b}))^n T_{\lambda,b}^{\ell n}$.  Let $\mathcal{P}$ be the set of complex polynomials in $H(\C)$, which is a dense subset of $H(\C)$, and let $g\in \mathcal{P}$. Since $T_{\lambda,b}(g)\in \mathcal{P}$ and $\deg T_{\lambda,b}(g) < \deg g$, by Lemma \ref{InverseLemma} there is a mapping $S_{\xi(T_{\lambda,b})}:\mathcal{P}\to\mathcal{P}$ such that $\xi(T_{\lambda,b})^n S_{\xi(T_{\lambda,b})}^n (g)=g$ for all $g\in \mathcal{P}$. Since $T_{\lambda,b}=L_{\lambda,b,z}$, by Lemma \ref{HallackLemma} there exist linear mappings $S_{1,\ell n}:\mathcal{P}\to\mathcal{P}$ such that $T_{\lambda,b}^{\ell n} S_{1,\ell n}g=g$ for all $g\in\mathcal{P}$. Thus for all $g\in \mathcal{P}$,
\begin{align*}
    \psi(T_{\lambda,b})^n S_{1,\ell n}S_{\xi(T_{\lambda,b})}^n (g) = \xi(T_{\lambda,b})^nT_{\lambda,b}^{\ell n}S_{1,\ell n}S_{\xi(T_{\lambda,b})}^n (g) = g,
\end{align*}
so the mapping $S_{1,\ell n}S_{\xi(T_{\lambda,b})}^n$ is a right inverse for $\psi(T_{\lambda,b})^n$ on $\mathcal{P}$.

We check the three conditions of the Hypercyclicity Criterion. As just mentioned, the third condition is satisfied.  Let $p$ be a polynomial of degree $d$.  Since $T_{\lambda,b}^n p=0$ whenever $n>d$, and since $\psi(0)=0$, we have that $(\psi(T_{\lambda,b}))^n p=0$ whenever $n>d$. Thus the first condition of the Hypercyclicity Criterion is satisfied.

What remains to check is the second condition.   Let $\xi(0)=w_0$. By Lemma \ref{InverseLemma}, there is a constant $C=C(d)>0$ and are constants $a_{i,n}$ for $1\leq i\leq d$ such that
\begin{align}\label{Sq}
    S_{\xi(T_{\lambda,b})}^n p  = w_0^{-n}\left(I + a_{1,n}T_{\lambda,b} + \cdots + a_{d,n} T_{\lambda,b}^d\right) p,
\end{align}
and $|a_{i,n}|<Cn^d$ for all $i$ and $n$. For each positive integer $n$, let $a_{0,n}=1$. Let $i$ be an integer such that $0\leq i\leq d$. 

By the linearity of $S_{1,\ell n}$ and equation (\ref{Sq}), to show 
$S_{1,\ell n}S_{\xi(T_{\lambda,b})}^n p\overn 0$ uniformly on compact subsets of $\C$, it suffices to show that 
\begin{align}\label{endgoal}
    S_{1,\ell n} (w_0^{-n}a_{i,n} T_{\lambda,b}^i p)\overn 0
\end{align}
uniformly on compact subsets of $\C$. If $i=0$, let $\sigma_n=w_0^{-n}$ and $t=|w_0|^{-1}$. Then the limit $(\ref{Slimit})$ in Lemma \ref{HallackLemma} implies (\ref{endgoal}). If $i>0$, let $\sigma_n=w_0^{-n}a_{i,n}$. Since $|w_0^{-n}a_{i,n}|\leq |w_0|^{-n} Cn^d<t^n$ for some $t>0$, limit (\ref{Slimit}) in Lemma \ref{HallackLemma} again yields (\ref{endgoal}), as desired. This shows that the second condition of the Hypercyclicity Criterion is satisfied. Hence $\psi(T_{\lambda,b})$ is hypercyclic.
\end{proof}

Theorem \ref{p(T)} really provides an algebra of hypercyclic non-convolution operators when $|\lambda|\geq 1$ (except when $\lambda=1$, in which case the operators are indeed convolution operators). To see this, let $\psi_1(z),\psi_2(z)\in H(\C)$ such that $\psi_1(0)=0=\psi_2(0)$ and $\psi_1(T_{\lambda,b})$, $\psi_2(T_{\lambda,b})$ are continuous. Then $\psi_1(0)+\psi(0)=0, \psi_1(0)\psi_2(0)=0$, and $\psi_1(T_{\lambda,b})\psi_2(T_{\lambda,b})$ is a continuous operator. Hence $\psi_1(T_{\lambda,b})+\psi_2(T_{\lambda,b})$ and $\psi_1(T_{\lambda,b})\psi_2(T_{\lambda,b})$ are hypercyclic by Theorem \ref{p(T)}, and any non-zero scalar multiple of them is as well. We next show that $|\lambda|\geq 1$ is necessary for the hypercyclicity of these types of operators.
\begin{theorem}\label{nonhypercyclic}
Let $\lambda,b\in\C$ with $|\lambda|<1$, and let $T_{\lambda,b}:H(\C)\to H(\C)$ be the  operator defined by $T_{\lambda,b}:f(z)\mapsto f'(\lambda z +b)$. If $\psi(z)$ is an entire function such that $\psi(0)=0$ and $\psi(T_{\lambda,b})$ is a continuous linear operator, then $\psi(T_{\lambda,b})$ is not hypercyclic.
\end{theorem}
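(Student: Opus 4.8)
The plan is to find a common fixed point in $H(\C)$ for all operators of the form $\psi(T_{\lambda,b})$ and then to show that no operator on $H(\C)$ with an eigenvalue structure forcing a fixed nonzero "direction" can be hypercyclic; more concretely, I would identify an explicit eigenvector common to the adjoint-type action and use the classical obstruction that a hypercyclic operator cannot have a bounded orbit on a nontrivial closed invariant subspace. Since $|\lambda| < 1$, the key structural fact is that $C_{\lambda,b}$ has the attracting fixed point $z_0 = b/(1-\lambda)$, i.e., $\lambda z_0 + b = z_0$. First I would observe that evaluation at $z_0$, the functional $\delta_{z_0}: f \mapsto f(z_0)$, interacts nicely with $T_{\lambda,b} = C_{\lambda,b}\circ D$: we have $T_{\lambda,b}f(z_0) = f'(\lambda z_0 + b) = f'(z_0)$, so $T_{\lambda,b}$ "acts like" $D$ at the point $z_0$. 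Iterating, $T_{\lambda,b}^n f(z_0) = \lambda^{n(n-1)/2} f^{(n)}(z_0)$ by equation (\ref{L^n eqn}) with $m=1$ and $r_n \to z_0$ — wait, more carefully, I would use that $L_{\lambda,b,z}^n f(z) = \lambda^{n(n-1)/2} f^{(n)}(\lambda^n z - r_n)$ and evaluate at the fixed point.

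The heart of the argument is then this: for any entire $\psi$ with $\psi(0) = 0$ for which $\psi(T_{\lambda,b})$ is continuous, I claim $\psi(T_{\lambda,b})$ fixes the constant functions, or more usefully, that there is a nonzero continuous linear functional $\Lambda$ on $H(\C)$ with $\Lambda \circ \psi(T_{\lambda,b}) = c\,\Lambda$ for a suitable scalar, and that this scalar has modulus at most $1$. Since $\psi(z) = z\,\xi(z)$ with $\xi$ entire, and since $T_{\lambda,b}$ raised to high powers annihilates polynomials while the relevant functional picks out low-order Taylor data at $z_0$, the functional $\Lambda = \delta_{z_0}$ should satisfy $\delta_{z_0}(\psi(T_{\lambda,b}) f) = 0$ for all $f$ — because $\psi(0)=0$ kills the order-zero term and each higher power $T_{\lambda,b}^k$ hitting $f$, when evaluated at $z_0$, produces $\lambda^{k(k-1)/2} f^{(k)}(z_0)$, but these all get multiplied by coefficients of $\psi$ and summed. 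That sum need not vanish, so instead the right object is: $\psi(T_{\lambda,b})$ has $\delta_{z_0}$ as an eigenvector of its transpose with eigenvalue $\psi'(0)\cdot(\text{something})$? Let me restate the cleanest route.

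The clean route is: show that the closed subspace $M = \{f \in H(\C): f(z_0) = 0\}$ is invariant under $\psi(T_{\lambda,b})$ — indeed $T_{\lambda,b}$ maps $M$ into... no; rather, show that the point spectrum of the transpose of $\psi(T_{\lambda,b})$ contains a point of modulus $\le 1$, which contradicts hypercyclicity by the standard result that the transpose of a hypercyclic operator has no eigenvalues at all (Hypercyclicity forbids eigenvectors of $T^*$; see \cite[Chapter 1]{GrosseErdmann}). The eigenvector will be built from $\delta_{z_0}$: since $T_{\lambda,b}^* \delta_{z_0}$ should be computable — $(T_{\lambda,b}^* \delta_{z_0})(f) = f'(z_0) = \delta_{z_0}(Df) = \delta_{z_0}(D f)$, so $T_{\lambda,b}^*\delta_{z_0} = D^*\delta_{z_0} = \delta_{z_0}'$ (the functional $f \mapsto f'(z_0)$). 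This does not give an eigenvector directly, but the span of $\{\delta_{z_0}, \delta_{z_0}', \delta_{z_0}'', \ldots\}$ is $T_{\lambda,b}^*$-invariant only up to the $\lambda$-twisting from $C_{\lambda,b}$; tracking the twist, $C_{\lambda,b}^* \delta_{z_0}^{(k)} = \lambda^k \delta_{z_0}^{(k)}$ since $C_{\lambda,b}^*$ acts on derivatives of point evaluations at the fixed point by the chain rule with factor $\lambda$ per derivative. Hence $T_{\lambda,b}^* = D^* C_{\lambda,b}^*$ sends $\delta_{z_0}^{(k)} \mapsto \lambda^k \delta_{z_0}^{(k+1)}$, and then $\psi(T_{\lambda,b})^* = \xi(T_{\lambda,b})^* T_{\lambda,b}^*$ on the functional $\delta_{z_0}$ gives $0$ at the order-zero level but a lower-triangular action on the basis $\{\delta_{z_0}^{(k)}\}_{k\ge 0}$ with the diagonal entry for $\delta_{z_0}$ equal to $0$; any lower-triangular operator has eigenvalue equal to a diagonal entry on the corresponding invariant subspace. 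The main obstacle is making this functional-space argument rigorous: $H(\C)^*$ is the space of functionals with compact support, $\delta_{z_0}^{(k)}$ do not span it, and the "lower-triangular" structure must be exploited on the finite-dimensional invariant subspaces $\Span\{\delta_{z_0}, \ldots, \delta_{z_0}^{(N)}\}$, on each of which $\psi(T_{\lambda,b})^*$ restricts to a nilpotent-plus-something matrix whose eigenvalue is visibly $0 = \psi(0)$-related, producing a nonzero eigenvector of $\psi(T_{\lambda,b})^*$; this eigenvector's existence alone already contradicts hypercyclicity regardless of the eigenvalue. That last reduction — exhibiting one genuine eigenvector of the transpose inside a finite-dimensional $T_{\lambda,b}^*$-invariant subspace — is the step I expect to require the most care, and is where the hypothesis $|\lambda| < 1$ is not even needed for the contradiction but is needed to guarantee $z_0$ exists and $C_{\lambda,b}$ actually has this attracting fixed point structure (when $|\lambda| = 1$, $\lambda \ne 1$, the same $z_0$ exists but the dynamics differ; when $\lambda = 1$ there is no fixed point, consistent with Birkhoff's operator being hypercyclic).
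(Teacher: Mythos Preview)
Your approach has a genuine gap: the finite-dimensional subspaces $\Span\{\delta_{z_0},\delta_{z_0}',\ldots,\delta_{z_0}^{(N)}\}$ are \emph{not} invariant under $\psi(T_{\lambda,b})^{*}$. Your own computation gives $T_{\lambda,b}^{*}\delta_{z_0}^{(k)}=\lambda^{k}\delta_{z_0}^{(k+1)}$, which raises the index; in particular $T_{\lambda,b}^{*}\delta_{z_0}^{(N)}=\lambda^{N}\delta_{z_0}^{(N+1)}$ already leaves the span. In the basis $\{\delta_{z_0}^{(k)}\}_{k\ge0}$ the matrix of $T_{\lambda,b}^{*}$ is a strictly lower-triangular weighted forward shift, and for such matrices it is the tails $\Span\{\delta_{z_0}^{(N)},\delta_{z_0}^{(N+1)},\ldots\}$ that are invariant, not the initial segments. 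Worse, if one writes a general $\Lambda\in H(\C)^{*}$ as $\Lambda=\sum_{k\ge0}a_{k}\delta_{z_0}^{(k)}$ and imposes $\psi(T_{\lambda,b})^{*}\Lambda=\mu\Lambda$ coefficientwise, a short induction (using that the lowest-order nonzero coefficient of $\psi$ is nonzero) forces $a_{k}=0$ for every $k$, for every choice of $\mu$. Thus $\psi(T_{\lambda,b})^{*}$ has no eigenvectors whatsoever, and the adjoint-eigenvalue obstruction is simply unavailable here.

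There is also a ``proves too much'' warning you half-noticed yourself: the fixed point $z_{0}=b/(1-\lambda)$ exists for every $\lambda\neq1$, and nothing in your outline uses $|\lambda|<1$. If the argument worked it would equally rule out hypercyclicity for $|\lambda|>1$, contradicting Theorem~\ref{p(T)}. The paper's proof is completely different and is genuinely quantitative in $|\lambda|$: it shows that for every $f\in H(\C)$ the orbit satisfies $(\psi(T_{\lambda,b}))^{n}f\to0$ uniformly on compact sets, so no orbit is dense. The engine is the estimate $|T_{\lambda,b}^{n}f(z)|\le Cn!\,2^{n}|\lambda|^{n(n-1)/2}$ from \cite{LeonRosa}; the factor $|\lambda|^{n(n-1)/2}$ decays super-exponentially precisely when $|\lambda|<1$ and overwhelms both the $n!$ and the combinatorial growth coming from expanding $(\xi(T_{\lambda,b}))^{n}$.
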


\begin{proof}
We will show that $(\psi(T_{\lambda,b}))^n f\overn 0$ for every $f\in H(\C)$. Since $\psi(0)=0$, we may write $\psi(z)=\alpha z^\ell \xi(z)$, where $\alpha\in \C\setminus\{0\}$ and $\xi(z)=\sum_{k=0}^\infty w_k z^k$ is an entire function such that $\xi(0)=w_0=1$. Let $c_{j,n}$ be the $j$th Taylor coefficient of the Taylor series of $(\xi(z))^n$ centered at zero, so that
\begin{align*}
    (\xi(z))^n = (1+w_1 z + w_2 z^2 + \cdots )^n \coloneqq 1+c_{1,n} z + c_{2,n} z^2  +\cdots,
\end{align*}
and let $r=\sup\{|w_k| :k\in\N\cup\{0\}\}$. Then by the same type of argument used in Lemma \ref{inverse coeff} to obtain the estimate (\ref{inverse coeff}), we have that 
\begin{align}\label{cjnBound}
    |c_{j,n}| \leq r^j\binom{n+j-1}{j}=r^j \left( \frac{n}{1}\right) \left( \frac{n+1}{2}\right)\cdots \left( \frac{n+j-1}{j}\right)\leq r^j n^j.
\end{align}

Now let $f\in H(\C)$, let $C=\max\{|f(z)|: |z|\leq 1\}$, let $R>0$ be given, and let $|z|\leq R$. As shown in the proof of \cite[Theorem 2.1]{LeonRosa}, there exists an $n_0\in \N$ such that for all $n\geq n_0$,
\begin{align}\label{LeonRosa}
    \left| T_{\lambda,b}^n f(z)\right| \leq C n! 2^n |\lambda|^{\frac{n(n-1)}{2}}.
\end{align}
Furthermore, since $\lim_{n\to\infty} 2rn^2\ell |\lambda|^{\ell n}=0$, there exists $n_1\in \N$ such that $n\geq n_1$ implies
\begin{align}\label{ratio}
    2rn^2\ell |\lambda|^{\ell n} <1.
\end{align}

Now let $n\geq \max\{n_0,n_1\}$, and let $c_{0,n}=1$. Then
\begin{align}\label{psi sum} \notag
  & \left| \left(\psi(T_{\lambda,b})\right)^n f(z)\right|\\ \notag
  & = \left|\alpha^n T^{n\ell} \left( I + c_{1,n}T  + c_{2,n}T^{2} + \cdots\right) f(z)\right| \\ \notag
   & \leq |\alpha|^n \sum_{j=0}^\infty \left| c_{j,n}T^{n\ell+j} f(z) \right|\\ 
   & \leq |\alpha|^n \sum_{j=0}^\infty  r^j n^j C(n\ell+j)!2^{n\ell+j}|\lambda|^{\frac{(n\ell+j)(n\ell+j-1)}{2}} \text{ by (\ref{cjnBound}) and (\ref{LeonRosa})}.
\end{align}
Let $\beta_{j,n}$ be the $j$th term in the sum in the previous line. Then (\ref{ratio}) implies
\begin{align}\label{ratio test}
    \left|\frac{\beta_{j+1,n}}{\beta_{j,n}}\right|=rn(n\ell +j+1)2|\lambda|^{j+\ell n}\leq rn(n\ell(j+2))2|\lambda|^{j+\ell n}<(j+2)|\lambda|^j.
\end{align}
We claim that inequality (\ref{ratio test}) implies that $|\beta_{j,n}|< (j+2)!|\lambda|^\frac{j(j-1)}{2} |\beta_{0,n}|$ for each $j\in\N\cup\{0\}$. To prove this, we proceed by induction. The basis for induction follows immediately upon substituting $j=0$ into (\ref{ratio test}). For the induction step, we assume that $|\beta_{j,n}|< (j+2)!|\lambda|^\frac{j(j-1)}{2} |\beta_{0,n}|$ is true for some $j\in \N\cup\{0\}$. Then
\begin{align*}
    |\beta_{j+1,n}| & < (j+3)|\lambda|^j |\beta_{j,n}| \text{ by (\ref{ratio})}\\
    & < (j+3)|\lambda|^j  (j+2)!|\lambda|^\frac{j(j-1)}{2}|\beta_{0,n}| \text{ by the induction hypothesis}\\
    & = (j+3)!|\lambda|^\frac{j(j+1)}{2}|\beta_{0,n}|,
\end{align*}
which had to be shown. This completes the proof of our claim.

Since $|\beta_{j,n}|< (j+2)!|\lambda|^\frac{j(j-1)}{2} |\beta_{0,n}|$ for each $j\in\N\cup\{0\}$, the sum in (\ref{psi sum}) is less than the sum $|\beta_{0,n}|\sum_{j=0}^\infty (j+2)!|\lambda|^\frac{j(j-1)}{2} $, which converges to zero as $n\to \infty$ since $|\beta_{0,n}|\overn 0$ and $\sum_{j=0}^\infty (j+2)!|\lambda|^\frac{j(j-1)}{2}$ converges by the ratio test. This proves that $(\psi(T_{\lambda,b}))^n f\overn 0$ uniformly on compact subsets of $\C$, so $\psi(T_{\lambda,b})$ cannot by hypercyclic.
\end{proof}

We summarize the previous two theorems in the following characterization.


\begin{theorem}
Let $\lambda,b\in\C$ and let $T_{\lambda,b}:H(\C)\to H(\C)$ be the operator defined by $T_{\lambda,b}:f(z)\mapsto f'(\lambda z +b)$. The algebra of operators
\begin{align*}
    \{\psi(T_{\lambda,b}): \psi(z)\in H(\C), \psi(0)=0 \text{ and } \psi(T_{\lambda,b}) \text{ is continuous}\}
\end{align*}
consists entirely of hypercyclic operators if $|\lambda|\geq 1$ and consists entirely of non-hypercyclic operators if $|\lambda|>1$.
\end{theorem}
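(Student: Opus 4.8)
The plan is to deduce this characterization directly from Theorems \ref{p(T)} and \ref{nonhypercyclic}, since the substantive analysis has already been carried out in those results and in Lemmas \ref{InverseLemma} and \ref{HallackLemma}. First I would record why the displayed collection is genuinely an algebra. If $\psi_1,\psi_2\in H(\C)$ satisfy $\psi_i(0)=0$ and $\psi_i(T_{\lambda,b})$ is continuous, then $\psi_1+\psi_2$, $\psi_1\psi_2$, and $c\psi_1$ (for any scalar $c\in\C$) are again entire functions vanishing at $0$, and the associated operators $\psi_1(T_{\lambda,b})+\psi_2(T_{\lambda,b})$, $\psi_1(T_{\lambda,b})\psi_2(T_{\lambda,b})=(\psi_1\psi_2)(T_{\lambda,b})$, and $c\,\psi_1(T_{\lambda,b})$ are continuous linear operators on $H(\C)$, being sums, compositions, and scalar multiples of continuous linear operators. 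This is exactly the observation already made in the paragraph following the proof of Theorem \ref{p(T)}; hence the collection is closed under addition, scalar multiplication, and operator multiplication, so it is an algebra of continuous linear operators on $H(\C)$.

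For the hypercyclicity assertion I would simply note that when $|\lambda|\geq 1$ every element of the collection is, by definition, of the form $\psi(T_{\lambda,b})$ with $\psi$ entire, $\psi(0)=0$, and $\psi(T_{\lambda,b})$ continuous, so Theorem \ref{p(T)} applies verbatim and each such operator is hypercyclic. For the remaining assertion, the same description of the elements together with Theorem \ref{nonhypercyclic} handles the case $|\lambda|<1$: there $(\psi(T_{\lambda,b}))^n f\to 0$ uniformly on compact sets for every $f\in H(\C)$, and an operator all of whose orbits converge to $0$ (a bounded, relatively compact set of iterates) cannot have a dense orbit in the infinite-dimensional space $H(\C)$, so no element of the collection is hypercyclic. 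Since $|\lambda|\geq 1$ and $|\lambda|<1$ exhaust all of $\C$, the dichotomy is complete; one should only take care that the non-hypercyclic clause is the one for $|\lambda|<1$ coming from Theorem \ref{nonhypercyclic}.

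I do not expect a genuine obstacle here: the theorem is an assembly of the two preceding ones, and essentially all the difficulty has been absorbed into Lemmas \ref{InverseLemma} and \ref{HallackLemma} and into Theorems \ref{p(T)} and \ref{nonhypercyclic}. The only points needing attention are bookkeeping: confirming that the product of two admissible symbols is again admissible (immediate, since $\psi_1(0)\psi_2(0)=0$ and a composition of continuous operators is continuous), and remarking on the boundary value $\lambda=1$, where $T_{1,b}$ and hence the whole algebra consists of convolution operators, recovering the classical Godefroy--Shapiro picture (which explains why the qualifier ``non-convolution'' is dropped from this statement).
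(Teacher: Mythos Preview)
Your proposal is correct and matches the paper's treatment exactly: the paper gives no separate proof of this theorem, presenting it explicitly as a summary of Theorems~\ref{p(T)} and~\ref{nonhypercyclic}, with the algebra verification already handled in the paragraph following Theorem~\ref{p(T)}. Your parenthetical remark that the non-hypercyclic clause should read $|\lambda|<1$ (from Theorem~\ref{nonhypercyclic}) rather than $|\lambda|>1$ is well taken; that is indeed a typographical slip in the statement.
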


\section{Hypercyclicity of $C_{\lambda,b}\circ \varphi(D)$}\label{last section}

We now look at another generalization of the operators $T_{\lambda,b}=C_{\lambda,b}\circ D$. Let $\varphi(z)$ be an entire function of exponential type, so that the operator $\varphi(D)$ is a convolution operator. We consider in this section the operators $L_{\lambda,b,\varphi}=C_{\lambda,b}\circ \varphi(D)$, each of which is a non-convolution operator whenever $\lambda\not= 1$.   We first prove a type of commutation relation between $C_{\lambda,b}$ and $\varphi(D)$.

\begin{lemma}\label{CommuteLemma}
Suppose $\varphi (D)$ is a convolution operator for some non-constant entire function $\varphi$ of exponential type. Let $\lambda,b\in \C$ with $\lambda \not=0$, and let $C_{\lambda,b}: H(\C)\to H(\C)$ be the composition operator $C_{\lambda,b}:f(z)\mapsto f(\lambda z + b)$. Then $C_{\lambda,b} \circ \varphi(D) = \varphi(\lambda^{-1}D) \circ C_{\lambda,b}  $.

\begin{proof}
Let $\varphi(z)=\sum_{k=0}^\infty w_k z^k$ and let $f(z)\in H(\C)$. Then
\begin{align*}
    \varphi(\lambda^{-1}D)C_{\lambda,b} f(z)=\varphi(\lambda^{-1}D) f(\lambda z+b) & = \sum_{k=0}^\infty w_k \lambda^{-k}D^k (f(\lambda z+b))\\
    &= C_{\lambda,b}\left(\sum_{k=0}^\infty w_k D^k f(z) \right) = C_{\lambda,b}\varphi(D) f(z).
\end{align*}
\end{proof}
\end{lemma}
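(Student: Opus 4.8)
The plan is to verify the identity $C_{\lambda,b}\circ\varphi(D) = \varphi(\lambda^{-1}D)\circ C_{\lambda,b}$ by a direct computation on an arbitrary entire function $f$, using the power series $\varphi(z)=\sum_{k=0}^\infty w_k z^k$. The key observation, and the only real content of the argument, is the elementary chain rule fact that $D^k\bigl(f(\lambda z + b)\bigr) = \lambda^k f^{(k)}(\lambda z + b)$, equivalently $D^k\circ C_{\lambda,b} = \lambda^k\, C_{\lambda,b}\circ D^k$ for every non-negative integer $k$. First I would establish this by a one-line induction on $k$. Then, applying $\varphi(\lambda^{-1}D)$ to $C_{\lambda,b}f$ termwise, each term $w_k\lambda^{-k}D^k\bigl(f(\lambda z+b)\bigr)$ becomes $w_k\lambda^{-k}\lambda^k f^{(k)}(\lambda z+b) = w_k\, (D^k f)(\lambda z + b) = w_k\, C_{\lambda,b}(D^k f)(z)$; summing over $k$ and pulling $C_{\lambda,b}$ out of the sum yields $C_{\lambda,b}\varphi(D)f(z)$, which is exactly what the displayed computation in the excerpt records.

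The one point that deserves care — and is the main (mild) obstacle — is the interchange of $C_{\lambda,b}$ with the infinite sum, i.e.\ the step $\sum_k w_k C_{\lambda,b}(D^k f) = C_{\lambda,b}\bigl(\sum_k w_k D^k f\bigr)$, together with the fact that $\varphi(D)f = \sum_k w_k D^k f$ converges in $H(\C)$ in the first place. Both are legitimate precisely because $\varphi$ has exponential type: that hypothesis guarantees $\varphi(D)$ is a well-defined continuous (convolution) operator on $H(\C)$, so $\sum_{k=0}^N w_k D^k f \to \varphi(D)f$ uniformly on compact sets, and likewise $\sum_{k=0}^N w_k\lambda^{-k} D^k g \to \varphi(\lambda^{-1}D)g$ for $g = C_{\lambda,b}f$ (note $\varphi(\lambda^{-1}z)$ also has exponential type, so $\varphi(\lambda^{-1}D)$ is itself a bona fide convolution operator). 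Continuity of the linear map $C_{\lambda,b}$ on $H(\C)$ then lets us pass the limit through, justifying the termwise manipulation. I would state these convergence facts explicitly, citing the Godefroy--Shapiro characterization of convolution operators recalled in the introduction, rather than leaving them implicit as the excerpt's short proof does.

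Since everything reduces to the power-series identity plus this routine continuity/convergence bookkeeping, no further machinery is needed; the lemma follows immediately once the chain-rule identity $D^k C_{\lambda,b} = \lambda^k C_{\lambda,b} D^k$ is in hand and the interchange of sum and operator is justified.
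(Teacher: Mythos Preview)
Your proposal is correct and follows essentially the same route as the paper: both compute $\varphi(\lambda^{-1}D)C_{\lambda,b}f$ termwise via the chain-rule identity $D^k(f(\lambda z+b))=\lambda^k f^{(k)}(\lambda z+b)$ and recognize the result as $C_{\lambda,b}\varphi(D)f$. The only difference is that you make explicit the convergence and continuity justifications (exponential type of $\varphi$ and $\varphi(\lambda^{-1}\cdot)$, continuity of $C_{\lambda,b}$) that the paper's four-line proof leaves implicit.
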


We now provide yet another family of hypercyclic non-convolution operators. 
\begin{theorem}\label{CompositionTheorem}
Suppose $\varphi (D)$ is a convolution operator for some non-constant entire function $\varphi$ of exponential type with $\varphi (0) = 0$. Let $\lambda,b\in \C$ and let $C_{\lambda,b}: H(\C)\to H(\C)$ be the composition operator $C_{\lambda,b}:f(z)\mapsto f(\lambda z + b)$.  If $|\lambda|\geq 1$, then the operator $L_{\lambda,b,\varphi}=C_{\lambda,b} \circ \varphi(D)$ is hypercyclic.
\end{theorem}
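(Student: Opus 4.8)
The plan is to verify the Hypercyclicity Criterion for the (continuous, linear) operator $L_{\lambda,b,\varphi}$ on the dense set $\mathcal{P}$ of polynomials, following the template of the proof of Theorem \ref{p(T)}. Since $\varphi(0)=0$, factor $\varphi(z)=z^m\tilde\varphi(z)$ with $m\in\N$ and $\tilde\varphi$ entire of exponential type with $\tilde\varphi(0)\neq0$ (dividing a function of exponential type by $z^m$ cannot increase its type). If $\tilde\varphi$ is constant then $L_{\lambda,b,\varphi}$ is a scalar multiple of $L_{\lambda,b,z^m}$, and hypercyclicity follows from Lemma \ref{HallackLemma} by rescaling the right inverses $S_{m,n}$ and invoking the limit (\ref{Slimit}); so assume $\tilde\varphi$ is non-constant. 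The engine of the proof is the operator identity
\[ L_{\lambda,b,\varphi}^n \;=\; \Theta_n(D)\circ L_{\lambda,b,z^m}^n, \qquad\text{where}\qquad \Theta_n(z)=\prod_{j=1}^{n}\tilde\varphi(\lambda^{-j}z), \]
which exhibits $L_{\lambda,b,\varphi}^n$ as a convolution operator post-composed with the operator $L_{\lambda,b,z^m}^n$ that was analyzed in Lemma \ref{HallackLemma}.

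To derive the identity, note first that $L_{\lambda,b,\varphi}=C_{\lambda,b}\varphi(D)=C_{\lambda,b}\tilde\varphi(D)D^m=\tilde\varphi(\lambda^{-1}D)C_{\lambda,b}D^m=\tilde\varphi(\lambda^{-1}D)\,L_{\lambda,b,z^m}$ by Lemma \ref{CommuteLemma}. Iterating this and using Lemma \ref{CommuteLemma} repeatedly to commute each copy of $L_{\lambda,b,z^m}=C_{\lambda,b}D^m$ rightward past a factor $\tilde\varphi(\lambda^{-k}D)$ (which turns it into $\tilde\varphi(\lambda^{-k-1}D)$) telescopes to the displayed formula; alternatively one can verify it directly by induction from equation (\ref{L^n eqn}). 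The \emph{crucial} feature is the ordering: the factors of $\Theta_n$ are dilates of $\tilde\varphi$ by $\lambda^{-1},\dots,\lambda^{-n}$, all of modulus at most $1$ since $|\lambda|\ge1$, and this is what makes $\Theta_n$ and $1/\Theta_n$ controllable uniformly in $n$.

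Now verify the three conditions on $\mathcal{P}$. Condition (a) is immediate: $\varphi(D)$ drops the degree of a nonzero polynomial by $m\ge1$ while $C_{\lambda,b}$ preserves degree, so $L_{\lambda,b,\varphi}^n p=0$ as soon as $mn>\deg p$. For condition (c), since $\Theta_n(0)=\tilde\varphi(0)^n\neq0$ the function $1/\Theta_n$ is holomorphic near $0$; writing its Taylor series there as $\sum_i d_{i,n}z^i$, the operator $Q_n=\sum_{i\ge0}d_{i,n}D^i$ restricts to a well-defined linear map on $\mathcal{P}$ (finitely many terms act on any given polynomial) with $\Theta_n(D)Q_n=Q_n\Theta_n(D)=I$ on $\mathcal{P}$. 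Setting $S_n=S_{m,n}\circ Q_n$, with $S_{m,n}:\mathcal{P}\to\mathcal{P}$ the right inverse of $L_{\lambda,b,z^m}^n$ from Lemma \ref{HallackLemma}, gives $L_{\lambda,b,\varphi}^n S_n=\Theta_n(D)L_{\lambda,b,z^m}^n S_{m,n}Q_n=\Theta_n(D)Q_n=I$ on $\mathcal{P}$, as required.

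The remaining condition (b) — that $S_n p\to0$ uniformly on compact sets for each $p\in\mathcal{P}$ — is where the real work is, and I expect it to be the main obstacle. Let $\rho_0$ be the distance from $0$ to the zero set of $\tilde\varphi$ (take $\rho_0=\infty$ if $\tilde\varphi$ has no zeros), fix $0<\rho<\rho_0$, and set $K=\max\!\big(1,\ \max_{|w|\le\rho}|1/\tilde\varphi(w)|\big)$. Because $|\lambda^{-j}|\le1$, every zero of $\Theta_n$ has modulus at least $\rho_0>\rho$, so $|1/\Theta_n(z)|=\prod_{j=1}^{n}|1/\tilde\varphi(\lambda^{-j}z)|\le K^n$ on $|z|=\rho$, and the Cauchy estimates yield $|d_{i,n}|\le K^n\rho^{-i}$ for all $i$ and $n$. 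Hence for each monomial $z^k$ the polynomial $Q_n(z^k)=\sum_{i=0}^{k}d_{i,n}\frac{k!}{(k-i)!}z^{k-i}$ has all its coefficients bounded in modulus by $K^n$ times a constant depending only on $k$ and $\rho$; therefore $S_n(z^k)=S_{m,n}(Q_n z^k)$ is a fixed finite linear combination $\sum_{l=0}^{k}\sigma_{l,n}S_{m,n}(z^l)$ in which each scalar sequence $\{\sigma_{l,n}\}_n$ satisfies $|\sigma_{l,n}|\le t^n$ for a suitable $t>0$. The limit (\ref{Slimit}) of Lemma \ref{HallackLemma} (applied with $\ell=1$) then forces each term to zero, so $S_n(z^k)\to0$, and linearity gives $S_n p\to0$ for all $p\in\mathcal{P}$. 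With all three conditions checked, the Hypercyclicity Criterion shows $L_{\lambda,b,\varphi}$ is hypercyclic. The reason this is confined to $|\lambda|\ge1$ is precisely this uniform-in-$n$ bound on the coefficients of $1/\Theta_n$: for $|\lambda|<1$ the dilation factors $\lambda^{-j}$ grow, dragging the zeros of $\Theta_n$ toward the origin and destroying that control.
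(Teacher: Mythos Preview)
Your proof is correct and follows the same overall architecture as the paper's: the identity $L_{\lambda,b,\varphi}^n=\Theta_n(D)\,L_{\lambda,b,z^m}^n$ (obtained via Lemma~\ref{CommuteLemma}), the right inverse $S_n=S_{m,n}\circ Q_n$, and the final appeal to the limit~(\ref{Slimit}) in Lemma~\ref{HallackLemma} are all exactly as in the paper. The one genuine difference is in how you control the coefficients of $Q_n$. The paper invokes Lemma~\ref{InverseLemma}: it factors the truncation of $\tilde\varphi$ to degree $d=\deg p$, builds an explicit algebraic inverse for each factor $\psi(\lambda^{-j}D)$, and then bounds the resulting coefficients $c_{j,n}$ combinatorially by counting multinomial terms as in~(\ref{multinomialexp}). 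You instead observe that since $|\lambda|\geq 1$ every zero of $\Theta_n$ lies outside the fixed disk $|z|<\rho_0$, so $|1/\Theta_n|\leq K^n$ on $|z|=\rho$, and Cauchy estimates give $|d_{i,n}|\leq K^n\rho^{-i}$ directly. Your route is shorter and bypasses Lemma~\ref{InverseLemma} entirely for this theorem; the paper's route has the advantage that Lemma~\ref{InverseLemma} is already in hand from the proof of Theorem~\ref{p(T)} and yields the slightly sharper polynomial-in-$n$ bound $|c_{j,n}|\leq n^d\alpha^d$, though only the exponential bound is actually needed.
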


\begin{proof}
We first write $\varphi(z)=z^m \psi(z)$, where $\psi(z)=\sum_{k=0}^\infty w_k z^k$ is an entire function of exponential type with $\psi(0)\not =0$. By repeatedly applying Lemma \ref{CommuteLemma}, we have that
\begin{align}\label{Lnformula} \notag
    L_{\lambda,b,\varphi}^n & = \underbrace{C_{\lambda,b} \varphi(D) C_{\lambda,b} \varphi(D) \cdots C_{\lambda,b} \varphi(D)}_{\text{$n$ times}}\\ \notag
    & =  \varphi\left(\lambda^{-n}D\right) \varphi\left(\lambda^{1-n}D\right) \cdots \varphi\left(\lambda^{-1}D\right) C_{\lambda,b}^n\\ \notag
    & = \frac{D^m}{\lambda^{nm}} \psi\left(\lambda^{-n}D\right) \frac{D^m}{\lambda^{(n-1)m}} \psi\left(\lambda^{1-n}D\right) \cdots \frac{D^m}{\lambda^{m}}\psi\left(\lambda^{-1}D\right) C_{\lambda,b}^n\\ \notag
    & = \psi\left(\lambda^{-n}D\right)  \psi\left(\lambda^{1-n}D\right) \cdots \psi\left(\lambda^{-1}D\right) \frac{1}{\lambda^\frac{nm(n+1)}{2}} D^{nm} C_{\lambda,b}^n \\  \notag
    & = \psi\left(\lambda^{-n}D\right)  \psi\left(\lambda^{1-n}D\right) \cdots \psi\left(\lambda^{-1}D\right) \lambda^\frac{nm(n-1)}{2} C_{\lambda,b}^n  D^{nm}\\
    & = \psi\left(\lambda^{-n}D\right)  \psi\left(\lambda^{1-n}D\right) \cdots \psi\left(\lambda^{-1}D\right) L_{\lambda,b,z^m}^n \text{ by (\ref{L^n eqn})},
\end{align}
where $L_{\lambda,b,z^m}:H(\C)\to H(\C)$ is the operator $L_{\lambda,b,z^m}:f(z)\mapsto f^{(m)}(\lambda z+b)$ considered in Lemma \ref{HallackLemma}.

Let $\mathcal{P}$ be the set of complex polynomials in $H(\C)$, which is a dense subset of $H(\C)$.  Let $p$ be a nonzero polynomial of degree $d$. We define a right-inverse $F_n:\mathcal{P}\to\mathcal{P}$ for $L_{\lambda,b,\varphi}^n$ on $\mathcal{P}$ as follows.  By Lemma \ref{InverseLemma}, there exist $C=C(d)>0$ and constants $a_i\in\C, 1\leq i \leq d$, such that for each positive integer $j$, the mapping $S_{\psi(\lambda^{-j} D)}:\mathcal{P}\to\mathcal{P}$ defined by
\begin{align}\label{Spsi}
    S_{\psi(\lambda^{-j} D)} p =w_0^{-1}\left(I+a_1 \lambda^{-j} D + \cdots + a_d (\lambda^{-j})^d D^d\right)p
\end{align}
is a right-inverse for $\psi(\lambda^{-j} D)$ on $\mathcal{P}$, and $|a_i|<C$ for each $i$.

Let $S_{m,n}:\mathcal{P}\to \mathcal{P}$ be the linear right inverse of $L_{\lambda,b,z^m}^n$ as defined in Lemma \ref{HallackLemma}.   We then define the mapping $F_n:\mathcal{P}\to \mathcal{P}$ by
\begin{align*}
     F_n p = S_{m,n} S_{\psi(D)} \cdots S_{\psi(\lambda^{2-n}D)} S_{\psi(\lambda^{1-n}D)} p,
\end{align*}
which satisfies $L_{\lambda,b,\varphi}^n F_n p =p$.

\sloppy The condition $\varphi(0)=0$ implies $\deg L_{\lambda,b,\varphi} p < \deg p$, which implies $L_{\lambda,b,\varphi}^n p= 0$ whenever $n>d$. What remains to show for the Hypercyclicity Criterion is that 
\begin{align}\label{Fnp}
    F_n p\overn 0
\end{align}
uniformly on compact subsets of $\C$. By multiplying out the product $S_{\psi(D)} \cdots S_{\psi(\lambda^{2-n}D)} S_{\psi(\lambda^{1-n}D)}$ using equation (\ref{Spsi}), we have that
\begin{align*}
    & S_{\psi(D)} \cdots S_{\psi(\lambda^{2-n}D)} S_{\psi(\lambda^{1-n}D)} p\\
    & = w_0^{-n} [I+a_1 D + \cdots + a_d D^d]\cdots [I+a_1 \lambda^{1-n} D + \cdots + a_d (\lambda^{1-n})^d D^d] p\\
    & = w_0^{-n} [I + c_{1,n} D + \cdots + c_{d,n} D^d] p,
\end{align*}
where the coefficients $c_{j,n}$ for $1\leq j\leq d$ satisfy
\begin{align}\label{cjn}
    c_{j,n}=\sum_{j_1+\cdots + j_n = j} \frac{a_{j_1}}{(\lambda^{j_1})^0} \frac{ a_{j_2}}{(\lambda^{j_2})^1}\frac{a_{j_3}}{(\lambda^{j_3})^2}\cdots \frac{a_{j_n}}{(\lambda^{j_n})^{n-1}},
\end{align}
where each $j_k$ is a non-negative integer. 

For each positive integer $n$, let $c_{0,n}=1$. Since $S_{m,n}$ is linear, to show (\ref{Fnp}) it suffices to show that 
\begin{align}\label{Smn}
    S_{m,n}(w_0^{-n}c_{j,n}p)\overn 0.
\end{align}
uniformly on compact subsets of $\C$ for each integer $j$ such that $0\leq j\leq d$. 

Let $j$ be an integer satisfying $0\leq j\leq d$. The number of terms in the sum (\ref{cjn}) is equal to the number of multinomial coefficients in a multinomial sum, which is $\ds\binom{j+n-1}{n-1}$. Let $\alpha = \max\{1,|a_1|,|a_2|,\cdots,|a_d|\}$. Since $|\lambda|\geq 1$, by (\ref{cjn}) we have that 
\begin{align}\label{multinomialexp}
   |c_{j,n}|\leq \binom{j+n-1}{n-1}\alpha^j \leq n^j\alpha^j\leq n^d\alpha^d<e^{nd}\alpha^d.
\end{align}
Now let $t=|w_0|^{-1}e^d\alpha^d$. Then $|w_0^{-n}c_{j,n}|\leq t^n$, and thus $S_{m,n}(w_0^{-n}c_{j,n}p)\overn 0$ uniformly on compact subsets of $\C$ by limit (\ref{Slimit}) in Lemma \ref{HallackLemma}, which shows that (\ref{Smn}) holds and completes the proof.

\end{proof}

\section*{Acknowledgements}
Funding: This work was supported by the St. Olaf College Collaborative Undergraduate Research and Inquiry program.

\end{document}